\definecolor{shadecolor}{gray}{0.875}
\newtheorem{thrm}{Theorem}[section]
\newtheorem{thrmx}{Theorem}
\newtheorem{corx}{Corollary}
\newtheorem{lem}[thrm]{Lemma}
\newtheorem{conj}[thrm]{Conjecture}
\newtheorem*{griffconj}{Griffiths Conjecture}
\theoremstyle{definition}
\newtheorem{defn}[thrm]{Definition}
\newtheorem{exmple}[thrm]{Example}
\newtheorem{rmk}[thrm]{Remark}
\newtheorem{ques}[thrm]{Question}
\DeclareMathOperator{\vol}{vol}
\DeclareMathOperator{\Id}{Id}
\DeclareMathOperator{\End}{End}
\DeclareMathOperator{\trace}{trace}
\title{On the positivity of high-degree Schur classes of an ample vector bundle}
\author{Jian Xiao}
\date{}
\begin{document}
\maketitle


\begin{abstract}
Let $X$ be a smooth projective variety of dimension $n$, and let $E$ be an ample vector bundle over $X$. We show that any non-zero Schur class of $E$, lying in the cohomology group of bidegree $(n-1, n-1)$, has a representative which is strictly positive in the sense of smooth forms. This conforms the prediction of Griffiths conjecture on the positive polynomials of Chern classes/forms of an ample vector bundle on the form level, and thus strengthens the celebrated positivity results of Fulton-Lazarsfeld for certain degrees.
\end{abstract}

\tableofcontents

\section{Introduction}
\subsection{Positive vector bundles}
Let $X$ be a smooth projective variety of dimension $n$ defined over complex numbers.
Let $E$ be a holomorphic vector bundle of rank $r$ over $X$, endowed with a smooth Hermitian metric $h$. If $D_{E, h}$ is the Chern connection of $(E, h)$, and $\Theta_{E, h} = D^2 _{E, h} \in \Lambda^{1,1} (X, \End(E))$ its curvature, then the corresponding Chern forms $c_k (E, h)$ are computed formally as follows:
\begin{equation*}
  \det(\Id + \frac{it}{2\pi} \Theta_{E, h}) = \sum_{k=0}^r c_k (E, h) t^k,
\end{equation*}
or equivalently,
\begin{equation*}
c_k (E, h) = \trace (\wedge ^k \frac{i}{2\pi} \Theta_{E, h}).
\end{equation*}
The Chern form $c_k (E, h)$ is a globally defined real $d$-closed $(k, k)$ form on $X$. By the Chern-Weil theory, the $k$-th Chern class of $E$, denoted by $c_k (E)$, can be represented by the smooth form $c_k (E, h)$ and lies in $H^{k, k}(X, \mathbb{Z})$.

Let $\pi: \mathbb{P}(E) \rightarrow X$ be the projective bundle of hyperplanes of $E$, and $\mathcal{O}_E (1)$ the tautological line bundle on $\mathbb{P}(E)$:
\begin{equation*}
0\rightarrow S \rightarrow \pi^* E \rightarrow \mathcal{O}_E (1) \rightarrow 0.
\end{equation*}
Then $E$ is called ample if the line bundle $\mathcal{O}_E (1)$ is ample on $\mathbb{P}(E)$.
Corresponding to the ampleness of $E$, there is a closely related differential-geometric positivity notion, which we describe below.
Denote by $(e_1, ..., e_r)$ a local normal frame of $E$ over a coordinate patch $\Omega \subset X$
with a local holomorphic coordinates system $(z_1, ..., z_n)$. Then over $\Omega$ the curvature tensor $\Theta_{E, h}$ can be written as
\begin{equation*}
  \Theta_{E, h} = \sum_{\lambda, \mu =1} ^r \sum_{j, k =1} ^n c_{jk\lambda \mu} dz_j \wedge d\bar{z}_k \otimes e_\lambda ^* \otimes e_\mu,\ \text{satisfying}\ c_{jk\lambda\mu}=\overline{c_{kj\mu \lambda}}.
\end{equation*}
To $\Theta_{E, h}$ correspond a natural Hermitian form $\theta_{E, h}$ on $TX \otimes E$ defined by
\begin{equation*}
  \theta_{E, h} = \sum_{j,k,\lambda, \mu}  c_{jk\lambda \mu} (dz_j \otimes e_\lambda ^*) \otimes (\overline{dz_k \otimes e_\mu ^*}),
\end{equation*}
such that at a point $x\in \Omega$ we have
\begin{equation*}
  \theta_{E, h} (u, u) = \sum_{j,k,\lambda, \mu}  c_{jk\lambda \mu} u_{j \lambda} \overline{u_{k \mu}}, \ u \in T_x X \otimes E_x.
\end{equation*}
The vector bundle $E$ endowed with the metric $h$ is called Griffiths-positive, if for any $\xi \in T_x X, \xi\neq 0$ and $s\in E_x, s\neq 0$,
\begin{equation*}
  \theta_{E, h} (\xi \otimes s, \xi \otimes s) >0.
\end{equation*}

It is a celebrated and still widely open problem of Griffiths \cite{griffithsConj} that the above algebraic-geometric ampleness and differential-geometric Griffiths-positivity are equivalent. One direction is clear. Endowing $\mathcal{O}_E (1)$ with the induced metric from $h$, it is easy to see that:
\begin{equation*}
E\ \text{being Griffiths-positive}\ \Rightarrow\ E\ \text{being ample}.
\end{equation*}

\subsection{Griffiths conjecture on numerically positive polynomials}
For an ample vector bundle $E$, its first Chern class $c_{1} (E)=c_1 (\det E)$ has a representative which is a K\"ahler metric, i.e., a smooth strictly positive $(1,1)$ form. It is natural to ask whether this also holds for the $k$-th Chern classes $c_k (E)$. We will confirm this for $c_{n-1} (E)$ whenever it is non-zero, i.e., when $E$ is an ample vector bundle of rank $r\geq n-1$.

Indeed, in the seminal paper \cite{griffithsConj}, Griffiths asked the following question, which we formulate as follows (see \cite{gulerThesis} for a nice exposition).
\begin{ques}[Griffiths] \label{ques griff}
Let $P\in \mathbb{Q}[c_1, ..., c_r]$ be a homogeneous polynomial of weighted degree $k$, the variable $c_i$ being assigned weight $i$.
\begin{enumerate}
  \item Assume that $(E, h)$ is an arbitrary Griffiths-positive bundle, characterize the polynomial $P$ such that the cohomology class $P(c_1 (E), ..., c_r (E))$ is numerically positive, that is, for any irreducible subvariety $V$ of dimension $k$ in $X$,
      \begin{equation*}
        \int_V P(c_1 (E), ..., c_r (E)) >0.
      \end{equation*}
  \item Assume that $(E, h)$ is an arbitrary Griffiths-positive bundle, characterize the polynomial $P$ such that the $(k, k)$ form $P(c_1 (E, h), ..., c_r (E, h))$ is positive.
\end{enumerate}
\end{ques}

\begin{rmk}
The first part of Griffiths question is on the cohomology level, while the second part is on the form level which is a priori stronger than the first part.
\end{rmk}

It is conjectured explicitly in \cite[Conjecture 0.7]{griffithsConj} that the desired polynomials in Question \ref{ques griff} (1) are given by Griffiths-positive polynomials. For the definition of Griffiths-positive polynomials, see \cite[Definition 5.9]{griffithsConj}. The numerical positivity of Chern classes for ample vector bundles is also conjectured in \cite{HartshorneAmpleBdl}.

Let $\Lambda(k, r)$ be the set of all partitions of $k$ by non-negative integers $\leq r$. 
Associated to each partition $\lambda \in \Lambda(k, r)$, there is a Schur polynomial $s_\lambda \in \mathbb{Q}[c_1,...,c_r]$ of weighted degree $k$ defined as the determinant:
\begin{equation*}
s_\lambda (c_1,...,c_r)=\det [c_{\lambda_j -j +l}]_{1\leq j, l \leq k}.
\end{equation*}
The space of homogeneous polynomials $P\in \mathbb{Q}[c_1, ..., c_r]$ of weighted degree $k$ is spanned by Schur polynomials, one can write such $P$ uniquely as a linear combination of the $s_\lambda$:
\begin{equation*}
  P=\sum_{\lambda \in \Lambda(k, r)} a_\lambda (P) s_\lambda.
\end{equation*}

On the cohomology level, not only for Griffiths-positive vector bundles but also for ample vector bundles, Question \ref{ques griff} (1) was answered completely in the celebrated work of Fulton-Lazarsfeld \cite{fultonlazarsfeldPositive}. This extends the previous works of Kleiman \cite{kleimanAmplesurface} for surfaces, Bloch-Gieseker \cite{blochgieseker} for Chern classes, Gieseker \cite{giesekerample} for monomials of Chern classes and Usui-Tango \cite{usuitango} for ample and globally generated bundles.

\begin{thrm}[Fulton-Lazarsfeld]
A weighted homogeneous polynomial $P$ is numerically positive for ample vector bundles of rank $r$ if and only if
\begin{equation*}
  P\neq 0\ \text{and}\ a_\lambda (P)\geq 0 \ \text{for all}\ \lambda \in \Lambda(k, r).
\end{equation*}
Equivalently, the Schur polynomials $s_\lambda, \lambda \in \Lambda(k, r)$ span the cone of numerically positive polynomials.
\end{thrm}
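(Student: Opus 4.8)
The plan is to prove the two implications of the stated equivalence separately, treating the cone condition ``$P\neq 0$ and $a_\lambda(P)\ge 0$ for all $\lambda$'' as being both necessary and sufficient for numerical positivity. For the sufficiency direction, since numerical positivity is tested against irreducible subvarieties $V$, and both the Schur data and the ampleness of $E$ behave well under restriction (the bundle $E|_V$ is again ample, and $c_i(E)|_V=c_i(E|_V)$ so that $s_\lambda(c(E))|_V=s_\lambda(c(E|_V))$), I would first reduce everything to the ``Chern-number'' statement: for a projective variety $X$ of dimension $k=|\lambda|$ and an ample bundle $E$ of rank $r\ge\lambda_1$, one has
\begin{equation*}
\int_X s_\lambda(c_1(E),\dots,c_r(E))>0.
\end{equation*}
Granting this, if $a_\lambda(P)\ge 0$ for all $\lambda\in\Lambda(k,r)$ and $P\neq 0$, then $\int_V P(c(E))=\sum_\lambda a_\lambda(P)\int_V s_\lambda(c(E|_V))$ is a nonnegative combination of positive numbers with at least one nonzero coefficient, hence strictly positive.

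The heart of the matter is thus the positivity of a single Schur class. My primary approach would be to realize $s_\lambda(E)$ as an effective class through the Giambelli--Thom--Porteous description of Schur polynomials as classes of degeneracy loci: for a sufficiently generic morphism $\phi\colon F\to E\otimes L$ between bundles manufactured from $E$ (with $L$ an auxiliary ample line bundle), the locus $D_\lambda(\phi)$ where $\phi$ acquires a prescribed rank drop carries the class $s_\lambda(E)$, up to controllable corrections. To produce such generic $\phi$ and, crucially, to guarantee that $D_\lambda(\phi)$ is nonempty of the expected codimension, I would invoke the Bloch--Gieseker covering construction: passing to a cyclic branched cover turns an ample line bundle into a high power, supplying enough sections to move $\phi$ into general position while preserving ampleness, after which a Kleiman-type transversality and dimension count forces $D_\lambda(\phi)$ to be nonempty of the expected dimension. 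When $\dim X=|\lambda|$ this locus is a nonempty effective $0$-cycle, so its degree $\int_X s_\lambda(E)$ is strictly positive. In parallel I would keep available the flag-bundle route: pushing up along $\pi\colon\mathrm{Fl}(E)\to X$, the bialternant formula exhibits $s_\lambda(E)$ as $\pi_*$ of a monomial in the Chern roots, i.e.\ in the first Chern classes of the tautological quotient line bundles; these are only relatively ample on the total space, so positivity of the pushforward again has to be extracted via the covering trick rather than by any formal manipulation.

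For the necessity direction I would construct, for each fixed $\mu\in\Lambda(k,r)$, test data on which the sign of $a_\mu(P)$ can be read off. Taking $X$ to be a Grassmannian $\Gr$ (or a product of projective spaces, or a flag variety) and $E$ a small ample twist of the tautological quotient bundle, the intersection numbers $\int_X s_\lambda(E)$ are computed by Schubert calculus, and the duality of Schubert classes lets one arrange the matrix $\bigl[\int_X s_\lambda(E)\bigr]$ over these examples to be invertible with a controlled sign pattern. Testing the numerically positive $P$ against each example, and using continuity of the intersection pairing as the ample twist degenerates to the nef tautological bundle, forces $a_\mu(P)\ge 0$; since $\mu$ was arbitrary and $P\neq 0$ is automatic from numerical positivity, the cone condition follows.

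The step I expect to be the genuine obstacle is the sufficiency, and within it the nonemptiness and dimensional genericity of the degeneracy loci $D_\lambda(\phi)$ for an ample bundle: this is precisely the point where ampleness must be converted into effective geometry, and where the Bloch--Gieseker covering argument (rather than Chern-class bookkeeping) does the essential work. The reduction, the assembly of positive combinations, and the converse via Schubert calculus are comparatively routine once this positivity of the individual $s_\lambda(E)$ is secured.
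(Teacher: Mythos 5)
The paper does not prove this statement: it is quoted as the theorem of Fulton--Lazarsfeld with a citation to \cite{fultonlazarsfeldPositive}, so there is no in-paper proof to compare against; I am comparing your sketch with the argument in the cited source. Your global architecture is right on both sides of the equivalence: the reduction of sufficiency to the single inequality $\int_X s_\lambda(E)>0$ when $\dim X=|\lambda|$ (using that ampleness and Schur classes restrict well to subvarieties), and the converse via the Grassmannian with its universal quotient bundle, Schubert duality $\int_{\Omega_\mu}s_\lambda(Q)=\delta_{\lambda\mu}$, a small ample perturbation of the nef bundle $Q$, and a limiting argument to read off $a_\mu(P)\ge 0$. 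That part is essentially the standard proof.

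The gap is in the step you yourself flag as the crux. You propose to get $\int_X s_\lambda(E)>0$ by choosing a generic $\phi\colon F\to E\otimes L$ and arguing that ``a Kleiman-type transversality and dimension count forces $D_\lambda(\phi)$ to be nonempty of the expected dimension.'' Transversality and Bertini-type genericity can only bound a degeneracy locus \emph{from above} in dimension (and Kleiman's theorem moreover requires a transitive group action on the ambient variety, which a general $X$ does not have); no amount of general position can force nonemptiness. Nonemptiness of degeneracy loci for ample $\mathrm{Hom}$-bundles is itself a positivity theorem of essentially the same depth as the statement you are proving, so as written the argument is circular at its central point. Fulton and Lazarsfeld avoid genericity altogether: they exhibit $s_\lambda(E)$ as the cone class $z^*[C_\lambda]$ of an explicit Schubert-type cone $C_\lambda\subseteq V\otimes E$ (with $V$ trivial) of pure dimension equal to $\mathrm{rank}(V\otimes E)$, and the degree of such a cone class equals $\int_{P(C_\lambda)}c_1(\mathcal{O}(1))^{\dim C_\lambda-1}$, which is positive simply because $\mathcal{O}_{P(V\otimes E)}(1)$ is ample and $P(C_\lambda)$ is a nonempty projective variety of the right dimension --- no coverings, no sections, no transversality. (This is precisely the subvariety $C\subset P(F)$, $F=\underline{V}\otimes E$, that the present paper borrows from Ross--Toma in the proof of its own Theorem A.) To repair your route you would need to import the Fulton--Lazarsfeld nonemptiness theorem for degeneracy loci as an external input, at which point the cone-class argument is both shorter and strictly more elementary.
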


Moreover, it is proved that a non-zero weighted homogeneous polynomial $P\in \mathbb{Q}[c_1, ..., c_r]$ is Griffiths-positive if and only if $P$ is a non-trivial non-negative linear combination of Schur polynomials (see \cite[Appendix A]{fultonlazarsfeldPositive}).

It should be noted that Fulton-Lazarsfeld's positivity results are extended to nef vector bundles on compact K\"ahler manifolds in \cite{DPSneftangent}.

Regarding Question \ref{ques griff} (2), Griffiths conjectured \cite[Page 247]{griffithsConj} that such polynomials are also given by Griffiths-positive polynomials. Combining with \cite[Appendix A]{fultonlazarsfeldPositive}, Griffiths conjecture can be formulated as follows:

\begin{griffconj}\label{griff conj}
Let $P\in \mathbb{Q}[c_1,...,c_r]$ be a weighted homogeneous polynomial, then the forms $P(c_1 (E, h), ..., c_r (E, h))$ are positive for any Griffiths-positive vector bundles $(E, h)$ over any smooth projective variety $X$ if and only if $P$ is a  non-negative linear combination of Schur polynomials.
\end{griffconj}

For the above conjecture, Griffiths commented that ``this will require a better understanding of the algebraic properties of the curvature form $\Theta$'', and verified that for a Griffiths-positive vector bundle $(E, h)$ of rank $2$, the second Chern form $c_2 (E, h)$ is positive.
See also \cite{gulersegre, diveriosegre, pingaliChernForm, liping2020nonnegative} for some related results and progress. In the general case, the conjecture is still widely open.

\subsection{The main result}

Motivated by the well known Nakai-Moishezon criterion, we are interested in the relation between the ampleness, numerical positivity and pointwise positivity of Chern classes for a vector bundle.
We propose the following conjecture which is in the intermediate of Fulton-Lazarsfeld's positivity theorem and Griffiths conjecture.

\begin{conj}\label{conj pos}
Let $X$ be a smooth projective variety of dimension $n$, and
$E$ an ample (or Griffiths-positive) vector bundle of rank $r$ over $X$. Let $s_\lambda (c_1,...,c_r)$ be a Schur polynomial, then the class $s_\lambda (c_1 (E), ..., c_r (E))$ has a smooth positive representative.
\end{conj}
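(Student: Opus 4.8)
The plan is to deduce form-level positivity from a positivity-preserving property of fiber integration, combined with a flag-bundle presentation of Schur classes. The first ingredient I would establish is a push-forward principle: if $\rho\colon Y \to X$ is a proper holomorphic submersion with $d$-dimensional fibers and $\Phi$ is a smooth strongly positive $(p,p)$-form on $Y$, then the fiber integral $\rho_* \Phi$ is a smooth strongly positive $(p-d,p-d)$-form on $X$, strictly positive wherever $\Phi$ is strictly positive along the fiber directions. I would prove this by duality: for an arbitrary weakly positive test form $\beta$ of complementary degree on $X$, the projection formula gives $\int_X (\rho_*\Phi)\wedge\beta = \int_Y \Phi \wedge \rho^*\beta$; since the pull-back of a weakly positive form under a submersion is again weakly positive, and the wedge of a strongly positive form with a weakly positive form of complementary degree is a non-negative top-degree form, the integral is $\geq 0$. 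As $\rho_*\Phi$ pairs non-negatively with every weakly positive form, it lies in the dual cone and is therefore strongly positive, with strictness inherited from strict positivity of $\Phi$ in the fiber directions.

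Granting this principle, it suffices to represent each Schur class as the fiber integral of a strongly positive smooth form on an auxiliary bundle. Here I would pass to the full flag bundle $\rho\colon \mathcal{F}(E)\to X$, on which the pulled-back bundle acquires a complete filtration with line-bundle quotients whose first Chern classes $x_1,\dots,x_r$ are the Chern roots of $E$. The relative dimension is $\binom{r}{2}$, and the standard Jacobi--Trudi/Kempf--Laksov computation expresses the Schur class as a push-forward of a staircase monomial: after possibly replacing $\lambda$ by its conjugate, there are integers $m_1 > m_2 > \cdots > m_r \geq 0$ with $s_\lambda(c_1(E),\dots,c_r(E)) = \rho_*\!\big(x_1^{m_1}\cdots x_r^{m_r}\big)$. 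The problem is thereby reduced to producing a strongly positive smooth representative of the monomial $x_1^{m_1}\cdots x_r^{m_r}$ on $\mathcal{F}(E)$, which then pushes forward to a smooth positive representative of $s_\lambda$ by the principle above.

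Two extreme cases already fall out and show that the framework has teeth. For a single-column partition $\lambda=(1^k)$ the monomial collapses to a power of the single class $c_1(\mathcal{O}_E(1))$ on $\mathbb{P}(E)$, which is ample because $E$ is ample; choosing a Kähler representative $\omega$ and pushing forward $\omega^{\,r-1+k}$, a power of a Kähler form and hence strongly positive, yields a strongly positive representative of $s_{(1^k)}$ unconditionally, and the case $\lambda=(1)$ recovers the Kähler representative of $c_1(E)$.

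The main obstacle is the generic monomial. Realizing $\mathcal{F}(E)$ as an iterated projective bundle $Y_{r-1}\to\cdots\to Y_1=\mathbb{P}(E)\to X$, only the first relative hyperplane class $\xi_1=c_1(\mathcal{O}_E(1))$ is genuinely positive; the remaining $\xi_2,\dots,\xi_{r-1}$ (equivalently the lower Chern roots $x_2,\dots,x_r$) are positive only along the fibers of $Y_j\to Y_{j-1}$ and are \emph{not} nef on the total space, so one cannot simply wedge together Kähler representatives of the individual factors. Overcoming this is the crux: I would attempt an induction on the rank, pushing forward one projective-bundle level at a time and, at each level, building a representative positive in the new fiber direction while borrowing enough positivity from the ample class $\xi_1$ pulled up from the base to dominate the negativity of the lower roots in the relevant products. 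Making this domination pointwise and simultaneously compatible across all fibers is exactly the unresolved analytic core; a choice of such representative amounts to the very curvature inequalities predicted by the Griffiths conjecture, which is why the general case remains open and why the present paper settles only the total-degree $(n-1,n-1)$ range, where weak and strong positivity coincide and a separate argument becomes available.
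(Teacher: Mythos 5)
You have not proved the statement, and you say so yourself: the ``unresolved analytic core'' you flag in the last paragraph is a genuine gap, not a deferred technicality. To be fair, the statement is labelled a conjecture in the paper, and the paper does not prove it in full either --- it establishes only the case $|\lambda|=n-1$. Your reduction steps are essentially sound: fiber integration does map smooth strongly positive forms to strongly positive forms (your duality argument works because weak positivity is preserved under arbitrary holomorphic pullback, as one sees from the characterization of weakly positive $(p,p)$-forms by restriction to $p$-dimensional subspaces, and the dual cone of the weakly positive cone is the strongly positive cone), and the Kempf--Laksov presentation $s_\lambda(E)=\rho_*\bigl(x_1^{\lambda_1+r-1}\cdots x_r^{\lambda_r}\bigr)$ on the flag bundle is correct on the cohomology level. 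But the entire burden then rests on producing a strongly positive smooth representative of the staircase monomial on $\mathcal{F}(E)$, and here nothing is offered beyond the hope of ``borrowing'' positivity from $\xi_1$. Ampleness of $E$ makes only $\xi_1=c_1(\mathcal{O}_E(1))$ a K\"ahler class; the lower roots $x_2,\dots,x_r$ are positive only along successive fibers and are not nef on the total space, and --- crucially --- ampleness supplies no Griffiths-positive metric, so there is no curvature tensor from which to manufacture the pointwise domination your induction requires. As you note, that step is of the same nature as the Griffiths conjecture itself. What your framework actually yields unconditionally is the single-column case $\lambda=(1^k)$, i.e.\ the Segre classes via $s_{(1^k)}(E)=\pi_*(\omega^{r-1+k})$ with $\omega$ K\"ahler in $c_1(\mathcal{O}_E(1))$, which is correct but known, and does not even recover the paper's Theorem for general $\lambda$ with $|\lambda|=n-1$.

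The paper's route to the $|\lambda|=n-1$ case is entirely different and deliberately avoids the pointwise problem you ran into. It invokes a duality criterion special to bidegree $(n-1,n-1)$ (Witt Nystr\"om, in the spirit of BDPP): an $(n-1,n-1)$ class has a smooth strictly positive representative if and only if it pairs strictly positively with every non-zero pseudo-effective $(1,1)$ class. The verification of the pairing then splits along the divisorial Zariski decomposition $L=P(L)+N(L)$: the effective part is handled by Fulton--Lazarsfeld on prime divisors, the movable part by writing $P(L)=\lim_m(\pi_m)_*\widehat{\omega}_m$ and using Demailly--Peternell--Schneider non-negativity for the nef pullbacks $\pi_m^*E$, and strictness is extracted by perturbing $E$ to the $\mathbb{R}$-twist $E\langle -t\omega\rangle$ and applying the Ross--Toma Hodge index theorem to the derived Schur class $s_\lambda^{(1)}(E)$. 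This trades your form-level ambitions for purely cohomological intersection-theoretic inputs, at the cost of being confined to the bidegree where weak and strong positivity coincide and cone duality is available; your program, if the monomial step could ever be carried out, would give all degrees at once, which is exactly why it cannot be expected to succeed with present tools for merely ample $E$.
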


By \cite{fultonNakaicriterion}, even on $\mathbb{P}^2$ the existence of positive representatives does not imply the ampleness of $E$.

\begin{rmk}
It is unclear whether for every smooth representative $\Phi$ of $s_\lambda (c_1 (E), ..., c_r (E))$ there is a smooth Hermitian metric $h$ on $E$ such that $\Phi= s_\lambda (c_1 (E,h), ..., c_r (E,h))$. Nevertheless, see \cite{pingaliChernForm} for some results on a surface.
\end{rmk}

In this note, we confirm Conjecture \ref{conj pos} when the weighted degree $|\lambda|=n-1$ by showing that the Schur class $s_\lambda$ has very strong pointwise positivity.

\begin{thrmx}\label{intro main thrm}
Let $X$ be a smooth projective variety of dimension $n$, and let $E$ be an ample vector bundle of rank $r$ on $X$. Then for any Schur polynomial $s_\lambda (c_1,...,c_r)$ with $|\lambda|= n-1$, whenever it is non-zero the cohomology class $s_\lambda(c_1 (E), ..., c_r (E))$ has a smooth representative which is strictly positive in the sense of $(n-1, n-1)$ forms.
\end{thrmx}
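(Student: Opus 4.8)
The plan is to exploit that the target bidegree $(n-1,n-1)$ is complementary to $(1,1)$, so that strict positivity is governed by pairing against positive $(1,1)$ objects. Recall that for bidegrees $(1,1)$ and $(n-1,n-1)$ all the standard notions of positivity coincide, and a smooth real $(n-1,n-1)$ form $\Phi$ is strictly positive exactly when $\Phi\wedge i\alpha\wedge\bar\alpha$ is a strictly positive volume form for every nonzero $(1,0)$-covector $\alpha$. I would therefore first characterize which classes in $H^{n-1,n-1}(X,\mathbb{R})$ carry such a representative, and then locate $s_\lambda(c_1(E),\dots,c_r(E))$ in that cone.

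The first step is to set up a cone duality in the complementary bidegree, in the spirit of the Demailly--Paun description of the K\"ahler cone (equivalently, phrased through the Gauduchon/movable cones): a class $\alpha\in H^{n-1,n-1}(X,\mathbb{R})$ should lie in the interior of the cone of classes represented by smooth strictly positive $(n-1,n-1)$ forms precisely when $\int_X\alpha\wedge T>0$ for every nonzero closed positive $(1,1)$-current $T$, i.e. when $\alpha$ pairs strictly positively with the whole pseudoeffective $(1,1)$-cone. The second step is to verify this pairing for $\alpha=s_\lambda(c_1(E),\dots,c_r(E))$. For an effective (e.g. ample) divisor $D$ one has $\int_X s_\lambda(E)\wedge[D]=\int_D s_\lambda(c_1(E|_D),\dots,c_r(E|_D))$, and because $E|_D$ is ample on the $(n-1)$-dimensional $D$ while $|\lambda|=n-1=\dim D$, this is the top-degree Schur number of an ample bundle and is strictly positive by the Fulton--Lazarsfeld theorem. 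Passing from divisor classes to arbitrary pseudoeffective $(1,1)$-classes by approximation, and controlling the strictness at the boundary of the cone, then places $s_\lambda(E)$ in the desired interior.

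To make the construction effective I would bring in the projective bundle $\pi:\mathbb{P}(E)\to X$ and, more generally, the flag bundle $\tau:\mathrm{Fl}(E)\to X$, the latter of dimension $N:=n+\binom{r}{2}$, with Chern roots $x_1,\dots,x_r$ of $E$. The projection formula shows that fiber integration preserves strict positivity in this complementary situation: if $\Psi$ is a strictly positive $(N-1,N-1)$ form then $\tau_*\Psi\wedge i\alpha\wedge\bar\alpha=\tau_*\big(\Psi\wedge\tau^*(i\alpha\wedge\bar\alpha)\big)$, whose integrand is a nonnegative top form, strictly positive along each compact fiber; hence $\tau_*\Psi$ is a strictly positive $(n-1,n-1)$ form. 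One has $s_\lambda(E)=\tau_*\big(\prod_i x_i^{\lambda_i+r-i}\big)$ with $\sum_i(\lambda_i+r-i)=|\lambda|+\binom{r}{2}=N-1$, so the class to be represented on $\mathrm{Fl}(E)$ is again of codimension one---the same complementary-bidegree phenomenon one dimension up. For the single-column partition $\lambda=(1^{\,n-1})$ the class is the Segre class, realized already on $\mathbb{P}(E)$ as $\pi_*\big(c_1(\mathcal{O}_E(1))^{\,n+r-2}\big)$, a pure power of the ample class $c_1(\mathcal{O}_E(1))$; its K\"ahler representative then pushes down to the desired positive form. This is the clean prototype.

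The main obstacle is the positivity itself for a general $\lambda$. The individual Chern roots $x_i$ are not nef---only the partial sums $x_1+\dots+x_k=c_1(\det Q_k)$, the tautological quotient determinants coming from the ample $E$, are nef---so the codimension-one monomial $\prod_i x_i^{\lambda_i+r-i}$ is not visibly a wedge of positive forms, and the naturally available positive forms represent only nonnegative combinations of the various $s_\lambda$, never isolating a single extremal Schur class. The crux is therefore to upgrade the numerical positivity furnished by Fulton--Lazarsfeld to an honest smooth strictly positive representative. I expect this to rest precisely on the analytic input isolated above: a Demailly--Paun type statement in bidegree $(n-1,n-1)$, to the effect that a class pairing strictly positively with every pseudoeffective $(1,1)$-class carries a smooth strictly positive representative. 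Establishing or circumventing that regularity, and with it the finer structure of the curvature that Griffiths singled out, is where the real difficulty lies.
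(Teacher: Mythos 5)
Your first step---reducing strict positivity of the $(n-1,n-1)$ class to the condition $s_\lambda(E)\cdot L>0$ for every nonzero pseudoeffective $(1,1)$ class $L$---is exactly the criterion the paper uses (it is Witt Nystr\"om's duality theorem between the pseudoeffective and movable cones, not something you need to establish; the ``Demailly--Paun type statement in bidegree $(n-1,n-1)$'' you flag as the real difficulty is already in the literature and is quoted as Lemma \ref{lem pos criterion}). Your treatment of the divisor case via restriction and Fulton--Lazarsfeld also matches the paper. The genuine gap is the sentence ``Passing from divisor classes to arbitrary pseudoeffective $(1,1)$-classes by approximation, and controlling the strictness at the boundary of the cone.'' This step fails as stated: a general pseudoeffective class is not a limit of effective divisor classes in any way that transports a strict inequality, and taking limits of strict inequalities only yields $s_\lambda(E)\cdot L\ge 0$. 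Indeed for the movable part of $L$ the best one gets from nefness of pullbacks is non-negativity (this is Lemma \ref{lem non-negative mov}), so the entire difficulty of the theorem is concentrated in upgrading $\ge 0$ to $>0$ on the movable cone, and your proposal contains no mechanism for that.

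The paper's resolution, which is absent from your sketch, runs as follows. Split $L=P(L)+N(L)$ by the divisorial Zariski decomposition; the negative part is handled by Fulton--Lazarsfeld as you do, so one may assume $L$ is a nonzero movable class. Then perturb the bundle: for $\omega$ very ample and $t>0$ small, $E\langle -t\omega\rangle$ is still ample, hence $s_\lambda(E\langle -t\omega\rangle)\cdot L\ge 0$, and expanding in $t$ gives $s_\lambda(E)\cdot L\ge t\,s_\lambda^{(1)}(E)\cdot\omega\cdot L+O(t^2)$, where $s_\lambda^{(1)}(E)$ is the derived Schur class. The strict positivity of $s_\lambda^{(1)}(E)\cdot\omega\cdot L$ is then forced by the Ross--Toma Hodge index theorem: the quadratic form $Q(\beta,\beta')=s_\lambda^{(1)}(E)\cdot\beta\cdot\beta'$ has signature $(1,h^{1,1}(X)-1)$, one checks $Q(\omega,L)\ge 0$ and $Q(L,L)\ge 0$ by the nef-pullback argument, and $Q(\omega,L)=0$ would force $Q(L,L)\le 0$ with equality only at $L=0$, a contradiction. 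This perturbation-plus-Hodge-index mechanism is the actual content of the proof; without it (or a substitute), your argument only establishes that $s_\lambda(E)$ lies in the closed dual cone, not its interior. Your flag-bundle and Segre-class observations are correct but, as you yourself note, do not produce positivity for a general extremal $s_\lambda$.
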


A $d$-closed strictly positive $(n-1, n-1)$ form on $X$ is also called a balanced metric in differential-geometric category.

\begin{exmple}
Let $\lambda \in \Lambda(k, r)$ be a partition of $k$. We list some simple Schur polynomials. By definition, we have:
\begin{equation*}
s_\lambda(c_1 (E), ..., c_r (E))=
\left\{
  \begin{array}{ll}
    c_{k} (E), &\ \text{when}\ \lambda = (k, 0,...,0); \\
    \text{Segre class}\ s_k (E),  &\ \text{when}\ \lambda = (1, 1,...,1,0,...,0);\\
    c_j(E) \cdot c_{k-j} (E) -c_{j-1} (E)\cdot c_{k+1-j} (E),  &\ \text{when}\ \lambda = (k-j, j,0,...,0).\\
  \end{array}
\right.
\end{equation*}
\end{exmple}

Since a numerically positive polynomial is a non-negative linear combination of Schur polynomials, Theorem \ref{intro main thrm} strengthens the positivity results of \cite{fultonlazarsfeldPositive} for $(n-1, n-1)$ classes pointwisely.

\begin{exmple}
It is proved in \cite{fultonlazarsfeldPositive} that the product of numerically positive polynomials is again numerically positive. In particular, any monomial $c_I = c_1 ^{i_1}\cdot c_2 ^{i_2}\cdots c_r ^{i_r}$ and
\begin{equation*}
  c_1 ^k - c_k = \sum_{j=1} ^k c_1 ^{k-j} (c_1 c_{j-1} - c_j)
\end{equation*}
are numerically positive for ample vector bundles\footnote{The classes $c_I$ and $c_1 ^k - c_k$ (and its variant) are studied in \cite{giesekerample} and \cite{DPSneftangent, liping2020nonnegative} respectively.}.
Applying Theorem \ref{intro main thrm} implies that whenever non-zero the classes $c_I (E)$ with $\sum_{j=1} ^r j i_j = n-1$, $c_1 ^{n-1} (E) - c_{n-1} (E)$ have smooth representative which are strictly positive.
\end{exmple}

\begin{rmk}
It is possible that $s_\lambda(c_1 (E), ..., c_r (E)), |\lambda|=n-1$ has a smooth representative which is strictly positive in the sense of $(n-1, n-1)$ forms even if $E$ is not ample. For example, let $F$ be an ample vector bundle of rank $r\geq n-1$ and let $E = F \oplus \mathcal{O}$, where $\mathcal{O}$ is the trivial line bundle. Then $c_{n-1} (E) = c_{n-1} (F)$, but $E$ is not ample.
\end{rmk}

For general $k$, by restricting $E$ to a subvariety of dimension $k+1$, we get:

\begin{corx}\label{intro cor1}
Let $X$ be a smooth projective variety of dimension $n$, and let $E$ be an ample vector bundle of rank $r$ on $X$. Let $V \subset X$ be an irreducible smooth subvariety of dimension $k+1$, given by the complete intersection of very ample divisors $H_1, ..., H_{n-k-1}$. Then for any Schur polynomial $s_\lambda (c_1,...,c_r)$ with $|\lambda|= k$, whenever it is non-zero the cohomology class $s_\lambda(c_1 (E), ..., c_r (E))\cdot [V]$ has a smooth representative which is strictly positive in the sense of $(n-1, n-1)$ forms.
\end{corx}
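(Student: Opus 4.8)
The plan is to reduce to Theorem~\ref{intro main thrm} by restriction, and then to globalize. First I would restrict $E$ to $V$: since $V$ is smooth projective of dimension $k+1$, since $E|_V$ is again ample, and since $|\lambda| = k = \dim V - 1$, Theorem~\ref{intro main thrm} applies on $V$ and produces a smooth, strictly positive $(k,k)$-form $\beta$ representing $s_\lambda(c_1(E|_V),\dots,c_r(E|_V)) = i^* s_\lambda(c_1(E),\dots,c_r(E))$, where $i\colon V\hookrightarrow X$ denotes the inclusion. By the projection formula $i_*[\beta] = s_\lambda(c_1(E),\dots,c_r(E))\cdot[V]$, so it remains to upgrade the strictly positive form $\beta$ on $V$ to a smooth strictly positive $(n-1,n-1)$-form on $X$ representing this pushforward.

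The key numerical input is that $i_*[\beta]$ pairs strictly positively with every nonzero nef class. For $\eta \in H^{1,1}(X,\mathbb{R})$ one has
\begin{equation*}
\langle i_*[\beta], \eta\rangle = \langle [\beta], i^*\eta\rangle = \int_V \beta \wedge (i^*\eta).
\end{equation*}
Because $V$ is an iterated very ample hyperplane section of dimension $k+1$, the Lefschetz hyperplane theorem shows that $i^*\colon H^{1,1}(X,\mathbb{R}) \to H^{1,1}(V,\mathbb{R})$ is injective once $k\geq 1$; hence for nonzero nef $\eta$ the pullback $i^*\eta$ is a nonzero nef class on $V$. Writing $\beta \geq \varepsilon\,\omega_V^{\,k}$ for a Kähler form $\omega_V$ on $V$ and representing $i^*\eta$ by a closed positive $(1,1)$-current, I would bound the pairing below by $\varepsilon\,([\omega_V]^{\dim V - 1}\cdot i^*\eta)$, which is strictly positive since $[\omega_V]^{\dim V-1}$ lies in the interior of the movable cone, dual to the pseudoeffective cone. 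The degenerate case $k=0$ is immediate: then $s_\lambda = 1$ and $[V]$ is represented by the strictly positive $(n-1,n-1)$-form $\omega_1\wedge\cdots\wedge\omega_{n-1}$, where $\omega_i$ is a Kähler form in $c_1(\mathcal{O}(H_i))$.

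The hard part is to pass from this numerical positivity to an honest smooth, strictly positive $(n-1,n-1)$-form on all of $X$. The naive globalizations fail away from $V$: extending $\beta$ through a tubular neighborhood and wedging with the $\omega_i$ produces a closed form that is strictly positive only near $V$, and since $s_\lambda(c_1(E),\dots,c_r(E))$ itself is not known to have a positive representative of intermediate degree, there is no control of positivity off $V$. Thus the essential step is an existence/regularization statement in bidegree $(n-1,n-1)$: that a class pairing strictly positively with all nonzero nef classes, equivalently lying in the interior of the cone dual to the nef cone, admits a smooth strictly positive representative. I expect to invoke exactly the cone-duality underlying the proof of Theorem~\ref{intro main thrm}, or equivalently to regularize the positive current $i_*\beta$ with a loss of positivity that the numerical margin above can absorb. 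This regularization in bidegree $(n-1,n-1)$ is where I expect the genuine analytic difficulty to lie.
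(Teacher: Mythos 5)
Your overall strategy---restrict to $V$, apply Theorem~\ref{intro main thrm} there, use Lefschetz to keep the restricted class non-zero, and conclude by cone duality on $X$---is the same as the paper's. But there is a genuine gap in the duality step. The criterion that actually characterizes $(n-1,n-1)$ classes with smooth strictly positive representatives (Lemma~\ref{lem pos criterion} in the paper, i.e.\ \cite[Corollary A]{nystromDualityMorse}) requires strict positivity of $\alpha\cdot L$ for every non-zero \emph{pseudo-effective} $(1,1)$ class $L$, not merely every non-zero nef class. Your claimed equivalence ``pairing strictly positively with all nonzero nef classes, equivalently lying in the interior of the cone dual to the nef cone, admits a smooth strictly positive representative'' is false: the dual of the nef cone is the (much larger) closed cone of effective curve classes, whose interior consists of big curve classes, and these need not be representable by strictly positive forms --- already on a surface this would assert that every big $(1,1)$ class is K\"ahler. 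So verifying $\langle i_*[\beta],\eta\rangle>0$ only for nef $\eta$ does not suffice, and the ``regularization of $i_*\beta$'' you flag as the remaining analytic difficulty cannot close this gap, because the numerical hypothesis you have established is too weak.

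Repairing this forces you to test against an arbitrary non-zero pseudo-effective class $L$, and there the restriction step needs more care than in the nef case: $L_{|V}$ is non-zero by weak Lefschetz (your observation), but pseudo-effectivity is \emph{not} automatically preserved under restriction to a subvariety. The paper handles this by noting that, since only intersection numbers are at stake, one may take $V$ to be the intersection of \emph{very general} members of the linear systems $|H_i|$, and then $L_{|V}$ is pseudo-effective by Demailly's regularization results. Once that is in place, the conclusion is immediate from the intersection-theoretic form of Theorem~\ref{intro main thrm} on $V$ (namely $s_\lambda(E_{|V})\cdot L_{|V}>0$ for non-zero pseudo-effective $L_{|V}$), with no need to produce the form $\beta$ on $V$ first or to push it forward; your detour through $\beta\geq\varepsilon\,\omega_V^{k}$ is harmless but unnecessary. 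Your separate treatment of the degenerate case $k=0$ is fine.
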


\begin{rmk}
Theorem \ref{intro main thrm} and Corollary \ref{intro cor1} also hold for $\mathbb{R}$-twisted ample vector bundles.
\end{rmk}

In Section \ref{sec prel} we present some notions and basic results that are needed in the proof of our main result. In Section \ref{sec proof}, we give the proof of the main result.

\section{Preliminaries} \label{sec prel}

\subsection{$\mathbb{R}$-twisted vector bundles}
In the proof of the main result, we need to apply a perturbation argument by using the notion of $\mathbb{R}$-twisted vector bundles. We give a very brief review on $\mathbb{R}$-twisted vector bundles. The reader can find more facts in \cite[Section 6.2]{lazarsfeldPosII} and the references therein.

\begin{defn}
A $\mathbb{R}$-twisted vector bundle $E\langle \delta \rangle$ on $X$ is an ordered pair consisting of a vector bundle $E$ on $X$, defined up to isomorphism, and a cohomology class $\delta\in H^{1,1} (X, \mathbb{R})$. The rank of $E\langle \delta \rangle$ is the rank of $E$.
\end{defn}

A $\mathbb{R}$-twisted vector bundle $E\langle \delta \rangle$ is understood as a formal object, yet when $\delta=c_1 (L)$ for some line bundle, $E\langle \delta \rangle$ can be considered as $E\otimes L$.

Let $\pi: \mathbb{P}(E) \rightarrow X$ be the projective bundle of hyperplanes of $E$, and $\mathcal{O}_E (1)$ the tautological line bundle:
\begin{equation*}
  0\rightarrow S \rightarrow \pi^* E \rightarrow \mathcal{O}_E (1) \rightarrow 0,
\end{equation*}
where $S_{(x, [l])} = l^{-1} (0) \subset E_x, l\in E_x ^*$.
If $L$ is a holomorphic line bundle on $X$, then $\mathbb{P}(E\otimes L) \cong \mathbb{P}(E)$ by an isomorphism under which $\mathcal{O}_{\mathbb{P}(E\otimes L)} (1)$ corresponds to $\mathcal{O}_{\mathbb{P}(E)} (1) \otimes \pi^* L$ on $\mathbb{P}(E)$. This motivates the following:

\begin{defn}
A $\mathbb{R}$-twisted vector bundle $E\langle \delta \rangle$ on $X$ is called ample (resp. nef) if $\eta_E + \pi^* \delta$ is a K\"ahler (resp. nef) class on $\mathbb{P}(E)$, where $\eta_E = c_1 (\mathcal{O}_{\mathbb{P}(E)} (1))$.
\end{defn}

The natural operations on vector bundles extend to the $\mathbb{R}$-twisted situation, here we just list a few.

\begin{defn}

The tensor of two $\mathbb{R}$-twisted vector bundles is defined by
\begin{equation*}
E_1 \langle \delta_1 \rangle \otimes E_2 \langle \delta_2 \rangle = (E_1 \otimes E_2)\langle \delta_1 + \delta_2 \rangle,
\end{equation*}
in particular, if $E_2$ is a trivial line bundle then $E_1 \langle \delta_1 \rangle \otimes E_2 \langle \delta_2 \rangle = E_1 \langle \delta_1 + \delta_2 \rangle$.

The direct sum of two $\mathbb{R}$-twisted vector bundles with the same twisting class is defined by
\begin{equation*}
E_1 \langle \delta \rangle \oplus E_2 \langle \delta \rangle = (E_1 \oplus E_2)\langle \delta \rangle.
\end{equation*}

A quotient of $\mathbb{R}$-twisted vector bundle $E\langle \delta \rangle$ is a $\mathbb{R}$-twisted vector bundle $Q\langle \delta \rangle$, where $Q$ is a quotient of $E$. Subbundles of $E\langle \delta \rangle$ are defined similarly.

Let $f: Y \rightarrow X$ be a morphism between two projective varieties, then we define the pullback of a $\mathbb{R}$-twisted vector bundle $E\langle \delta \rangle$ on $X$ to be the $\mathbb{R}$-twisted vector bundle $f^* (E\langle \delta \rangle) = (f^* E)\langle f^*\delta \rangle $ on $Y$.
\end{defn}

\begin{rmk}\label{perturb ample}
From the above discussions, if $E\langle \delta \rangle$ is ample, then for $\varepsilon\in H^{1,1}(X, \mathbb{R})$ small enough (by endowing the space $H^{1,1}(X, \mathbb{R})$ with some norm), the bundle $E\langle \delta + \varepsilon \rangle$ is still ample. The pullback of a nef $\mathbb{R}$-twisted vector bundle by a morphism is nef, and the pullback of an ample $\mathbb{R}$-twisted vector bundle by a finite morphism is ample.
\end{rmk}

\begin{rmk}
The positivity results of \cite{fultonlazarsfeldPositive, blochgieseker, DPSneftangent} also hold in the $\mathbb{R}$-twisted setting.
\end{rmk}

\subsection{Schur polynomials and Schur classes} \label{sec schur}

Let $\Lambda(n, r)$ be the set of all partitions of $n$ by non-negative integers $\leq r$, i.e., any element $\lambda \in \Lambda(n, r)$ is a sequence
\begin{equation*}
  r\geq \lambda_1 \geq \lambda_2 \geq...\geq \lambda_n \geq 0
\end{equation*}
satisfying $|\lambda|=\sum_{i=1} ^n \lambda_i =n$. Associated to each partition $\lambda \in \Lambda(n, r)$, there is a Schur polynomial $s_\lambda \in \mathbb{Q}[c_1,...,c_r]$ of weighted degree $n$ defined as the following determinant:
\begin{align*}
  s_\lambda (c_1,...,c_r)&=\det [c_{\lambda_j -j +k}]_{1\leq j, k \leq n}\\
  &= \left|
                                                                             \begin{array}{cccc}
                                                                               c_{\lambda_1} & c_{\lambda_1 +1} & ... & c_{\lambda_1 +n-1} \\
                                                                               c_{\lambda_2 -1} & c_{\lambda_2} & ... & c_{\lambda_2 +n-2} \\
                                                                               ... & ... & ... & ... \\
                                                                               c_{\lambda_n - n+1} & c_{\lambda_n -n+2} & ... & c_{\lambda_n} \\
                                                                             \end{array}
                                                                           \right|,
\end{align*}
where we use the convention $c_0 =1, c_i =0$ whenever $i\notin [0, r]$. These polynomials form a basis for the vector space of all homogeneous polynomials of weighted degree $n$ in $r$ variables, where the variable $c_i$ is of weighted degree $i$. Geometrically, Schur polynomials appear in describing the cohomology of Grassmannians (see e.g. \cite[Chapter 14]{fultonIntersectionBOOK2nd}).

\begin{exmple}
We list some simple Schur polynomials.
\begin{enumerate}
  \item For small $n$, the Schur polynomials with $\lambda \in \Lambda(n, n)$, are given by
\begin{align*}
  &n=1: s_{(1)} = c_1,\\
  &n=2: s_{(2, 0)}= c_2,\ s_{(1, 1)} = c_1 ^2- c_2,\\
  &n=3: s_{(3,0,0)} = c_3,\ s_{(2,1,0)}  = c_1 c_2 -c_3,s_{(1,1,1)}  = c_1 ^3 -2 c_1 c_2 +c_3.
\end{align*}
  \item For general $n$, we have
  \begin{equation*}
s_\lambda(c_1, ..., c_r)=
\left\{
  \begin{array}{ll}
    c_{n}, &\ \text{when}\ \lambda = (n, 0,...,0); \\
    c_j  c_{n-j}  -c_{j-1} c_{n+1-j},  &\ \text{when}\ \lambda = (n-j, j,0,...,0).\\
  \end{array}
\right.
\end{equation*}
  \item For $\lambda = (1, 1,...,1,0,...,0)$, $s_\lambda (c_1, ..., c_r)$ is the Segre polynomial.
\end{enumerate}

\end{exmple}

\begin{defn}
Let $X$ be a smooth projective variety of dimension $n$, and $E$ a holomorphic vector bundle of rank $r$ on $X$. Let $c_1 (E),...,c_r (E)$ be the Chern classes of $E$, then the Schur classes of $E$ are defined by
\begin{equation*}
  s_\lambda (E) = s_\lambda (c_1 (E), ..., c_r (E)) \in H^{|\lambda|, |\lambda|} (X, \mathbb{R}).
\end{equation*}

\end{defn}

The definition of Chern classes and Schur classes also make sense for $\mathbb{R}$-twisted bundles (see \cite[Chapter 8]{lazarsfeldPosII}).

In particular, let $\lambda$ be a partition, for each $0\leq i \leq |\lambda|$ the ``derived'' Schur classes (see \cite{ross2019hodge}) $s_\lambda ^{(i)} (E)$ are defined by requiring that
\begin{equation}\label{eq schur}
  s_\lambda (E\langle \delta \rangle)=\sum_{i=0} ^{|\lambda|}  s_\lambda ^{(i)} (E)\cdot \delta^i.
\end{equation}
Then $s_\lambda ^{(i)} (E) \in H^{|\lambda|-i, |\lambda|-i} (X, \mathbb{R})$, with $s_\lambda ^{(0)} (E) =s_\lambda (E)$.
(\ref{eq schur}) can be understood as follows. Let $x_1,..., x_r$ be the Chern roots of $E$, then $x_1 + \delta, ..., x_r +\delta$ are the Chern roots of $E\langle \delta \rangle$. The Schur class is a polynomial of Chern roots, thus one can write
\begin{equation*}
  s_\lambda (x_1 + \delta,...,x_r + \delta) = \sum_{i=0} ^{|\lambda|} s_\lambda ^{(i)} (x_1, ..., x_r) \delta^i,
\end{equation*}
where $s_\lambda ^{(i)} (x_1, ..., x_r)$ is a symmetric polynomial of degree $|\lambda|-i$, giving the class $s_\lambda ^{(i)} (E)$.
This is motivated by the identity
\begin{equation*}
s_\lambda (E\langle \delta \rangle) = s_\lambda (E \otimes L),
\end{equation*}
when $\delta = c_1 (L)$ for some line bundle $L$.

One also has:
\begin{equation*}
  s_\lambda ^{(i)} (E\langle \delta \rangle) =\sum_{k=i}^{|\lambda|} \left(
                                                                       \begin{array}{c}
                                                                         k \\
                                                                         i \\
                                                                       \end{array}
                                                                     \right)
   s_\lambda ^{(k)} (E) \cdot \delta^{k-i}.
\end{equation*}

\begin{rmk}
The classes $s_\lambda (E), s_\lambda ^{(i)} (E)$ and their twisted variants have functorial properties under pullbacks, just as Chern classes.
\end{rmk}

\subsection{Positivity of smooth forms}

We recall several positivity notions on $(p, p)$ forms. The standard reference is \cite{Dem_AGbook}.
Let $(z_1,...,z_n)$ be the Euclidean coordinates on $\mathbb{C}^n$, and $\Lambda^{p, q} (\mathbb{C}^n)$ the space of $(p, q)$ forms on $\mathbb{C}^n$ with constant coefficients. Denote the volume form of $\mathbb{C}^n$ by
\begin{equation*}
d\vol_{\mathbb{C}^n} = idz_1 \wedge d\bar{z}_1 \wedge...\wedge idz_n \wedge d\bar{z}_n.
\end{equation*}

\begin{defn}

A $(p, p)$ form $u \in \Lambda^{p, p} (\mathbb{C}^n)$ is said to be positive if for any $\alpha_j \in \Lambda^{1, 0} (\mathbb{C}^n)$, $1\leq j \leq n-p$, the $(n, n)$-form
$$u \wedge i \alpha_1 \wedge \overline{\alpha_1} \wedge...\wedge i \alpha_{n-p} \wedge \overline{\alpha_{n-p}}$$
differs with $d\vol_{\mathbb{C}^n}$ by a non-negative multiplier.
A $(p, p)$ form $u \in \Lambda^{p, p} (\mathbb{C}^n)$ is said to be strongly positive if $u$ is a non-negative combination
\begin{equation*}
  u = \sum_{s=1} ^m a_s i \alpha_{s, 1} \wedge \overline{\alpha_{s, 1}} \wedge...\wedge i \alpha_{s, p} \wedge \overline{\alpha_{s, p}},
\end{equation*}
where $\alpha_{s, j} \in \Lambda^{1, 0} (\mathbb{C}^n)$ and $a_s \geq 0$.

\end{defn}

The set of positive $(p,p)$ forms is a closed convex cone in $\Lambda^{p, p} (\mathbb{C}^n)$, similarly for the set of strongly positive $(n-p, n-p)$ forms. These two cones are dual to each other via the pairing between $\Lambda^{p, p} (\mathbb{C}^n)$ and $\Lambda^{n-p, n-p} (\mathbb{C}^n)$.
It is easy to see that strongly positive forms must be positive.

\begin{rmk}
For forms of bidegrees $(0,0), (1,1), (n-1, n-1), (n,n)$, the above two positivity notions are equivalent. Furthermore, an $(1,1)$ form
$$u=i\sum_{j,k} u_{jk} dz_j \wedge d\bar{z}_k$$
is positive or strongly positive if and only if the Hermitian matrix $[u_{jk}]_{1\leq j, k\leq n}$ is semi-positive. Denote
$\widehat{dz_j \wedge d\bar{z}_k}$ to be the $(n-1, n-1)$-form such that
$$i dz_j \wedge d\bar{z}_k \wedge \widehat{dz_j \wedge d\bar{z}_k} = d\vol_{\mathbb{C}^n} .$$
Then an $(n-1,n-1)$ form
$$u=\sum_{j,k} u_{jk} \widehat{dz_j \wedge d\bar{z}_k}$$
is positive or strongly positive if and only if the Hermitian matrix $[u_{jk}]_{1\leq j, k\leq n}$ is semi-positive.
\end{rmk}

\begin{defn}
For an $(1,1)$ or $(n-1, n-1)$ form $u$, we call it strictly positive if the Hermitian matrix $[u_{jk}]_{1\leq j, k\leq n}$ is positive definite.

\end{defn}

It is clear that these positivity notions can be also formulated on a complex manifold $X$ by requiring the corresponding positivity at every point of $X$.
Using the duality between forms and currents, one can also define positivity for currents (see \cite[Chapter 3]{Dem_AGbook}). For example, on a compact complex manifold a $(k,k)$ current is positive (resp. strongly positive) if it takes non-negative values on all smooth strongly positive (resp. positive) $(n-k, n-k)$ forms.

\section{Proof of the main result} \label{sec proof}
In this section, we give the proof of the main result.
As mentioned in the introduction, the main result holds in the $\mathbb{R}$-twisted setting, we are going to prove:

\begin{thrm}
Let $X$ be a smooth projective variety of dimension $n$, and let $E$ be a $\mathbb{R}$-twisted ample vector bundle of rank $r$ on $X$. Then for any Schur polynomial $s_\lambda (c_1,...,c_r)$ with $|\lambda|= n-1$, whenever it is non-zero the cohomology class $s_\lambda(E)$ has a smooth representative which is strictly positive in the sense of $(n-1, n-1)$ forms.
\end{thrm}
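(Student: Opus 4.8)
\emph{Strategy.} My plan is to represent \(s_\lambda(E)\) as a fibre integral (push-forward) of a smooth closed strongly positive form along a flag bundle of \(E\), and then to exploit that in bidegree \((n-1,n-1)\) positivity and strong positivity coincide, so that strictness can be tested by wedging with a single \((1,1)\)-form. The basic principle is that fibre integration preserves positivity: if \(\pi\colon Y\to X\) is a proper holomorphic submersion of relative dimension \(d=\dim Y-n\) and \(\Phi\) is a smooth strongly positive \((n-1+d,\,n-1+d)\)-form on \(Y\), then \(\pi_*\Phi\) is a positive \((n-1,n-1)\)-form on \(X\). Moreover, by the projection formula, for any \((1,0)\)-covector \(\alpha\) at a point \(x\in X\),
\[
\pi_*\Phi\wedge i\alpha\wedge\overline{\alpha}=\pi_*\bigl(\Phi\wedge\pi^*(i\alpha\wedge\overline{\alpha})\bigr),
\]
so \(\pi_*\Phi\) is \emph{strictly} positive at \(x\) as soon as, for each \(\alpha\neq 0\), the top-degree form \(\Phi\wedge\pi^*(i\alpha\wedge\overline{\alpha})\) is a strictly positive multiple of the volume form on a non-empty open subset of the fibre \(Y_x\).

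\emph{Realisation of the Schur class.} I would take \(Y\) to be the flag bundle of \(E\) adapted to the shape of \(\lambda\), realised as a tower of projective bundles, with tautological quotient bundles \(Q_1,\dots,Q_t\) and the nef classes \(\sigma_a=c_1(\det Q_a)\), each pulled back from a Grassmann bundle \(\Gr_{k_a}(E)\). Following the flag-bundle (Jacobi--Trudi / Borel--Weil) realisation of Schur classes underlying \cite{fultonlazarsfeldPositive}, the regular monomial in the tautological classes pushes forward to \(s_\lambda(E)\); concretely I would write \(s_\lambda(E)=\pi_*\bigl(\sigma_1^{e_1}\cdots\sigma_t^{e_t}\bigr)\) for suitable exponents with \(\sum_a e_a=\dim Y-1\). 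Since \(E\) is \(\mathbb{R}\)-twisted ample, each \(\det Q_a\) is ample on \(\Gr_{k_a}(E)\) and can be endowed with a positively curved metric, whose pullback is a smooth \emph{semi}positive representative \(\omega_a\) of \(\sigma_a\), degenerate exactly along the fibres of \(Y\to\Gr_{k_a}(E)\). Setting \(\Phi=\omega_1^{e_1}\wedge\cdots\wedge\omega_t^{e_t}\), this is a closed strongly positive form which, after push-forward, represents \(s_\lambda(E)\); this already recovers the Fulton--Lazarsfeld positivity on the form level.

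\emph{The main obstacle: strictness.} The heart of the matter — and the step I expect to be hardest — is upgrading positivity to \emph{strict} positivity, i.e.\ pointwise positive definiteness of the Hermitian matrix of \(\pi_*\Phi\). The difficulty is that the \(\omega_a\) are only semipositive, so the monomial \(\Phi\) can be degenerate and the naive fibre integral need not be strictly positive. Here the hypothesis \(|\lambda|=n-1\) is decisive: it places \(s_\lambda(E)\) in complementary dimension one, so \(\Phi\) is a \emph{codimension-one} form on \(Y\) and strictness reduces to \(\Phi\) being pointwise nondegenerate up to a single direction transverse to \(\pi^*\alpha\). To secure this I would use that, because \(E\) is ample, the combined class \(\sum_a\sigma_a\) is Kähler on \(Y\): indeed \(Y\) embeds into the fibre product \(\prod_a \Gr_{k_a}(E)\), on which the sum of the Plücker–Kähler forms restricts to a genuine Kähler metric. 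Together with the mixed-discriminant inequality — a product of Kähler forms of total codimension one, wedged with any nonzero positive \((1,1)\)-form, is a strictly positive volume form — this should force \(\Phi\wedge\pi^*(i\alpha\wedge\overline{\alpha})\) to be strictly positive on a dense open subset of each fibre.

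\emph{Where the difficulty concentrates.} The genuine content is thus a transversality statement: one must show that the specific regular exponents \(e_a\) coming from the Schur formula keep \(\Phi\) nondegenerate in exactly the required \(\dim Y-1\) directions at a generic point of each fibre, rather than collapsing onto the union of the degeneracy loci of the \(\omega_a\). I would approach this by an inductive analysis down the projective-bundle tower, integrating at each stage a positive power of the relatively ample tautological class (whose relative push-forwards are the strictly positive Segre forms of Guler–Diverio type) and propagating strict positivity through the stages; the \(\mathbb{R}\)-twisted perturbation of Remark~\ref{perturb ample} is available to replace any residual boundary-nef behaviour by strict positivity. Verifying this pointwise nondegeneracy, and matching it cleanly with the cohomological identity \(s_\lambda(E)=\pi_*\Phi\), is the crux of the proof.
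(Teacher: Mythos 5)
There is a genuine gap, and it sits exactly where you locate it yourself: the passage from positivity to \emph{strict} positivity of the fibre integral. Your construction of $\Phi=\omega_1^{e_1}\wedge\cdots\wedge\omega_t^{e_t}$ from semipositive representatives of the tautological classes, and the observation that $\pi_*\Phi$ is a positive $(n-1,n-1)$-form representing $s_\lambda(E)$, is essentially the fibre-integration argument of \cite{DPSneftangent} and only yields (semi)positivity. The mechanism you propose for strictness does not close the gap: the ``mixed-discriminant inequality'' you invoke applies to products of K\"ahler forms, whereas your $\omega_a$ are each degenerate along fibre directions, and a monomial of semipositive forms whose \emph{sum} is K\"ahler can still vanish identically at a point (already $\omega_1=i\,dz_1\wedge d\bar z_1$, $\omega_2=i\,dz_2\wedge d\bar z_2$ on $\mathbb{C}^2$ gives $\omega_1^2=0$ while $\omega_1+\omega_2$ is K\"ahler). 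So the needed statement --- that $\Phi\wedge\pi^*(i\alpha\wedge\overline{\alpha})$ is strictly positive somewhere on each fibre for every $\alpha\neq 0$ --- is precisely the pointwise curvature control that makes the form-level Griffiths conjecture hard, and your proposal defers it rather than proving it. A metric/Chern--Weil proof of strict positivity in this generality is not currently available.

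The paper avoids this entirely by working cohomologically: by the duality between the pseudo-effective and movable cones \cite{nystromDualityMorse}, an $(n-1,n-1)$ class admits \emph{some} strictly positive smooth representative if and only if it pairs strictly positively with every non-zero pseudo-effective $(1,1)$ class $L$. One then splits $L$ by the divisorial Zariski decomposition, handles the divisorial part by \cite{fultonlazarsfeldPositive}, and handles the movable part by an $\mathbb{R}$-twisted perturbation $E\langle -t\omega\rangle$ combined with the Hodge index theorem for the derived Schur class $s_\lambda^{(1)}(E)$ from \cite{ross2019hodge}. Note that the representative produced this way is not claimed to be of Chern--Weil type, which is exactly the flexibility your approach gives up. If you want to salvage your route, you would need either to prove the transversality statement you flag as the crux (currently open in this generality), or to convert your fibre-integral inequality into the numerical criterion $s_\lambda(E)\cdot L>0$ and then you are back to needing the perturbation and Hodge-index input of the paper.
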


\begin{proof}
Since on a surface the Schur class of bidegree $(1,1)$ is just the first Chern class, without loss of generality, in the following we always assume that $n\geq 3$.

We need to prove the existence of a strictly positive representative in an $(n-1, n-1)$ class. The idea is to apply the following very useful positivity criterion by using duality between the pseudo-effective and the movable cone \cite[Corollary A]{nystromDualityMorse} (see also \cite[Appendix]{fuxiao14kcone}, \cite{toma2010note}, \cite{BDPP13}).

\begin{lem}\label{lem pos criterion}
Let $X$ be a smooth projective variety of dimension $n$ and $\alpha \in H^{n-1, n-1}(X, \mathbb{R})$, then $\alpha$ has a smooth representative which is strictly positive if and only if for any non-zero pseudo-effective $(1,1)$ class $L \in H^{1,1} (X, \mathbb{R})$, the intersection number $\alpha \cdot L >0$. Moreover, for $\alpha \in H^{n-1, n-1}(X, \mathbb{Q})$, one only needs to check the positivity condition $\alpha \cdot L >0$ for $L$ rational.
\end{lem}

Recall that $L \in H^{1,1} (X, \mathbb{R})$ is called pseudo-effective if it has a representative which is positive in the sense of $(1,1)$ currents.

Applying Lemma \ref{lem pos criterion} in our setting, it is sufficient to check that:
\begin{equation}\label{eq pos}
s_\lambda (E) \cdot L >0
\end{equation}
for any non-zero pseudo-effective class $L \in H^{1,1} (X, \mathbb{R})$. If $L$ is the class of a hypersurface, then it follows from \cite{fultonlazarsfeldPositive}; if $L$ is a K\"ahler class, then the non-negativity follows from \cite[Theorem 2.5]{DPSneftangent}.

We reduce the proof to the above two cases. To this end, we apply the divisorial Zariski decomposition due to \cite{Bou04} (see also \cite{Nak04}):

\begin{lem}\label{lem zar}
Let $X$ be a smooth projective variety (or more generally, a compact K\"ahler manifold) of dimension $n$, let $L \in H^{1,1} (X, \mathbb{R})$ be a pseudo-effective class. Then there exists a decomposition
\begin{equation*}
  L = P(L) + N(L),
\end{equation*}
such that $P(L)$ is movable and $N(L)$ is effective.
\end{lem}

More precisely, $ N(L)= \sum_{i=1} ^ m a_i [D_i]$ for some $a_i \geq 0$ and some prime divisors $D_i$, and $P(L)$ can be written as
$$P(L) = \lim_m (\pi_m)_* \widehat{\omega}_m,$$
where $\pi_m : X_m \rightarrow X$ is a modification and $\widehat{\omega}_m$ is a K\"ahler class on the projective manifold $X_m$.

Now we fix a non-zero pseudo-effective class $L \in H^{1,1} (X, \mathbb{R})$, and apply the divisorial Zariski decomposition to $L$.

If $N(L) = \sum a_j [D_j] \neq 0$, i.e., in the summands there exists a prime divisor $D_{j_0}$ such that $a_{j_0}>0$, then by \cite{fultonlazarsfeldPositive},
\begin{equation*}
s_\lambda (E)\cdot [D_{j_0}] = s_\lambda (E|_{D_{j_0}}) >0.
\end{equation*}

If $N(L)=0$, i.e., $L =P(L)$ is a non-zero movable class, then there is a sequence of modifications $\pi_m : X_m \rightarrow X$ and K\"ahler classes $\widehat{\omega}_m$ on $X_m$ such that
\begin{equation*}
  L = \lim_m  (\pi_m)_* \widehat{\omega}_m.
\end{equation*}
Thus,
\begin{align*}
s_\lambda (E) \cdot L &= \lim_m s_\lambda (E) \cdot (\pi_m)_* \widehat{\omega}_m\\
 &= \lim_m s_\lambda (\pi_m ^* E) \cdot \widehat{\omega}_m \geq 0
\end{align*}
where the last inequality holds by \cite[Theorem 2.5]{DPSneftangent}, since $\pi_m ^* E $ is nef on $X_m$.

In particular, we have proved:

\begin{lem}\label{lem non-negative mov}
For $E$ a $\mathbb{R}$-twisted nef vector bundle, $s_\lambda (E) \cdot L \geq 0$ for any non-zero movable class $L$.
\end{lem}

Combining the above discussions, we only need to prove (\ref{eq pos}) when $L$ is a non-zero movable class.

In the following, we assume that $L$ is a non-zero movable class.

Similar to \cite{blochgieseker} (see also \cite[Chapter 8]{lazarsfeldPosII}), we use the ampleness of $E$ and a perturbation argument to prove that $s_\lambda (E) \cdot L >0$.

Fix a very ample line bundle $H$ such that $\omega =c_1 (H)$ on $X$. Since $E$ is ample, for $c >0$ sufficiently small and $t\in [0, c]$, the $\mathbb{R}$-twisted bundle $E\langle -t \omega\rangle$ is also ample, thus by Lemma \ref{lem non-negative mov}
\begin{equation*}
  s_\lambda (E\langle -t \omega\rangle) \cdot L \geq 0, \ \text{for any}\  t\in [0, c].
\end{equation*}

Recall that the Schur classes $s_\lambda (E\langle -t \omega\rangle)$ and $s_\lambda (E)$ are related by the following identity (see (\ref{eq schur})):
\begin{equation*}
  s_\lambda (E\langle -t \omega\rangle) = s_\lambda (E) - t s_\lambda ^{(1)} (E) \cdot \omega + O(t^2),
\end{equation*}
where $s_\lambda ^{(1)} (E) \in H^{n-2, n-2} (X, \mathbb{R})$, and $O(t^2)$ is the term of order $t^2$. Therefore,
\begin{equation*}
  s_\lambda (E) \cdot L =  s_\lambda (E\langle -t \omega\rangle) \cdot L + t s_\lambda ^{(1)} (E) \cdot \omega \cdot L + O(t^2).
\end{equation*}

To finish the proof, we only need to check that $s_\lambda ^{(1)} (E) \cdot \omega \cdot L >0$.

The idea is to apply the Hodge index theorem due to Ross-Toma \cite{ross2019hodge}. Roughly speaking, we first verify that
\begin{equation}\label{eq non-negative}
s_\lambda ^{(1)} (E) \cdot \omega \cdot L \geq0 \ \text{and}\ s_\lambda ^{(1)} (E) \cdot L \cdot L \geq0.
\end{equation}
Assuming that $s_\lambda ^{(1)} (E) \cdot \omega \cdot L =0$, i.e., $L$ is primitive with respect to $(s_\lambda ^{(1)} (E), \omega)$, then by the Hodge index theorem,
\begin{equation*}
s_\lambda ^{(1)} (E) \cdot L \cdot L \leq0
\end{equation*}
with equality holds only if $L=0$. Combining with $s_\lambda ^{(1)} (E) \cdot L \cdot L \geq0$ we get $L=0$, which contradicts with our assumption $L \neq 0$.

\begin{lem}\label{lem hodge index}
The class $s_\lambda ^{(1)} (E) \in H^{n-2, n-2} (X, \mathbb{R})$ satisfies Hodge index theorem, and (\ref{eq non-negative}) holds.
\end{lem}

The proof of Lemma \ref{lem hodge index} mainly follows from \cite{ross2019hodge}.

Fix a complex vector space $V$ of dimension $\dim V = r+n$, and let $\underline{V} = X \times V$ be the trivial vector bundle on $X$. Let $F = \underline{V} \otimes E$, and denote the rank of $F$ by $f+1 = (r+n) r$.

Let $p: P(F)\rightarrow X$ be the projective bundle of lines of $F$, and $U$ the universal quotient bundle on $P(F)$:
\begin{equation*}
0\rightarrow \mathcal{O}_{P(F)} (-1) \rightarrow p^* F \rightarrow U \rightarrow 0.
\end{equation*}
The bundle $U$ is of rank $f$, and it is nef when $F$ is nef.
By \cite[Proposition 5.2]{ross2019hodge} (though it is stated for certain degrees in their setting, it also holds for general degrees),
\begin{equation*}
  s_\lambda ^{(1)} (E)\cdot \beta = p_* c_{f-1}(U_{|C}) \cdot \beta
\end{equation*}
for any $\beta \in H^{1,1}(X, \mathbb{R})$, where $C \subset P(F)$ is an irreducible subvariety of dimension $f+1$ and locally a product over $X$ (see \cite{ross2019hodge} or \cite{fultonIntersectionBOOK2nd}). Therefore, by the projection formula
\begin{equation}\label{eq proj}
  s_\lambda ^{(1)} (E)\cdot \beta \cdot \beta' =  c_{f-1}(U_{|C}) \cdot p^* \beta \cdot p^*\beta',
\end{equation}
where $\beta, \beta' \in H^{1,1}(X, \mathbb{R})$.

The bundle $E$ being ample implies that $F$ is ample. By \cite[Theorem 4.1]{ross2019hodge}, the quadratic form
\begin{align*}
  Q(\beta, \beta') :&= c_{f-1}(U_{|C}) \cdot p^* \beta \cdot p^*\beta'\\
  &=s_\lambda ^{(1)} (E)\cdot \beta \cdot \beta'
\end{align*}
satisfies Hodge index theorem, that is, it has signature $(1, h^{1,1}(X) -1)$.

As for (\ref{eq non-negative}), we first claim that $s_\lambda ^{(1)} (E)\cdot \omega \cdot L \geq 0$. Recall that $L=\lim  (\pi_m)_* \widehat{\omega}_m$, thus
\begin{equation*}
  s_\lambda ^{(1)} (E)\cdot \omega \cdot L = \lim_m s_\lambda ^{(1)} (\pi_m ^* E)\cdot \pi_m ^*\omega \cdot \widehat{\omega}_m.
\end{equation*}
Applying the above argument and (\ref{eq proj}) to the nef bundle $\pi_m ^* E$ on $X_m$ shows that every term
\begin{align*}
&s_\lambda ^{(1)} (\pi_m ^* E)\cdot \pi_m ^*\omega \cdot \widehat{\omega}_m\\
&= c_{f-1}({U_m}_{|C_{m}}) \cdot p_m^* (\pi_m ^*\omega) \cdot p_m ^*\widehat{\omega}_m\\
& \geq 0,
\end{align*}
where the second inequality follows from \cite[Corollary 2.2]{DPSneftangent} and $\omega$ being an ample divisor class. Here we denote the corresponding objects by the subscript $m$.
Thus the limit is also non-negative, finishing the proof of our claim.

Next we claim that $s_\lambda ^{(1)} (E)\cdot L \cdot L \geq 0$. The proof is similar to the first claim. Note that
\begin{equation*}
  s_\lambda ^{(1)} (E)\cdot L \cdot L = \lim_m s_\lambda ^{(1)} (\pi_m ^* E)\cdot \pi_m ^* (\pi_{m*}\widehat{\omega}_m) \cdot \widehat{\omega}_m.
\end{equation*}
By Siu decomposition \cite{siuDecomposition}, we have\footnote{This is the transcendental analogy of Negativity Lemma in algebraic geometry.}
\begin{equation*}
\pi_m ^* (\pi_{m*}\widehat{\omega}_m) = \widehat{\omega}_m + [D]
\end{equation*}
for some effective $\mathbb{R}$-divisor $D$. Therefore,
\begin{align*}
 & s_\lambda ^{(1)} (\pi_m ^* E)\cdot \pi_m ^* (\pi_{m*} \widehat{\omega}_m) \cdot \widehat{\omega}_m\\
 & =
  s_\lambda ^{(1)} (\pi_m ^* E)\cdot [D] \cdot \widehat{\omega}_m + s_\lambda ^{(1)} (\pi_m ^* E)\cdot \widehat{\omega}_m \cdot \widehat{\omega}_m.
\end{align*}
Repeating the argument for $\pi_m ^* E$ on $X_m$ as the first claim and applying \cite{DPSneftangent} show that every summand is non-negative. This finishes the proof of the second claim.

By using \cite[Theorem 4.3]{ross2019hodge} or \cite{fultonlazarsfeldPositive},
\begin{equation*}
Q(\omega, \omega) =c_{f-1}(U_{|C}) \cdot p^* \omega^2 >0.
\end{equation*}
Therefore, if $Q(\omega, L)=s_\lambda ^{(1)} (E) \cdot \omega \cdot L =0$, then by Lemma \ref{lem hodge index} $Q(L, L) \leq 0$ with the equality holds if and only if $L=0$. By Lemma \ref{lem hodge index} again, $Q(L, L) \geq 0$, thus
$s_\lambda ^{(1)} (E) \cdot \omega \cdot L =0$ yields $L=0$,
contradicting with $L\neq 0$.

Therefore, $s_\lambda ^{(1)} (E) \cdot \omega \cdot L >0$.
As $L$ is arbitrary, this finishes the proof of the theorem.

\end{proof}

\begin{rmk}
When $s_\lambda (E)= c_{n-1} (E)$ is the $(n-1)$-th Chern class of $E$ and the rank $r\geq n-1$, the estimate $c_{n-1} (E)\cdot L >0$ can be alternatively proved by an inductive argument.

Recall that in our perturbation argument, we assume that $\omega = c_{1} (H)$ for a very ample line bundle $H$. Use the same notation $H$ to denote a very general smooth hypersurface in the linear system of $H$. Then by the identity
\begin{equation}\label{eq chern}
  c_k (E\langle \delta \rangle) = \sum_{i=0} ^k \left(
                                                  \begin{array}{c}
                                                    r-i \\
                                                    k-i \\
                                                  \end{array}
                                                \right)
                                                c_i (E) \delta^{k-i}, \ 0\leq k \leq r,
\end{equation}
we have
\begin{align*}
  c_{n-1} (E) \cdot L &=  c_{n-1} (E\langle -t \omega\rangle) \cdot L + t (r-n+2)c_{n-2} (E) \cdot \omega \cdot L + O(t^2)\\
  &=c_{n-1} (E\langle -t \omega\rangle) \cdot L + t (r-n+2)c_{n-2} (E_{|H}) \cdot L_{|H} + O(t^2).
\end{align*}
By the weak Lefschetz theorem, the restriction $i^*: H^{2} (X, \mathbb{R})\rightarrow H^{2} (H, \mathbb{R})$ is injective whenever $2\leq n-1$. Our assumption $n\geq 3$ shows that the weak Lefschetz theorem holds.
Thus the class $L_{|H}$ is non-zero. As $H$ is very general, it has a representative which is a non-zero positive $(1,1)$ current on $H$.

By induction, $c_{n-2} (E_{|H})$ has a representative which is a smooth strictly positive $(n-2, n-2)$ form on the hypersurface $H$.
This yields that
\begin{equation*}
c_{n-2} (E) \cdot \omega \cdot L = c_{n-2} (E_{|H}) \cdot L_{|H}>0,
\end{equation*}
finishing the proof for $c_{n-1} (E)$.

As for an inductive argument to the general Schur class $s_\lambda (E)$, it is not clear to us if $s_\lambda ^{(1)} (E)$ is numerically positive, or equivalently, a non-negative linear combination of Schur polynomials of weighted degree $(n-2)$.
\end{rmk}

Corollary \ref{intro cor1} follows from applying Theorem \ref{intro main thrm} to the subvariety $V$ and the weak Lefschetz theorem.

\begin{proof}[Proof of Corollary \ref{intro cor1}]
We can assume that $n\geq 3$. By the proof of Theorem \ref{intro main thrm}, we only need to check the positivity of intersection numbers with non-zero pseudo-effective $(1,1)$ classes.

Since $V$ is the complete intersection of very ample divisors and we only care about intersection numbers, we can assume that $V$ is the intersection of very general elements of the linear systems of the $H_i$. 
Given a non-zero pseudo-effective $(1,1)$ class $L$, by weak Lefschetz theorem its restriction $L_{|V}$ is non-zero. As $V$ is very general, it is also pseudo-effective by \cite{DemaillyRegularization}. Thus applying Theorem \ref{intro main thrm} on $V$ shows that
\begin{equation*}
s_\lambda (E) \cdot [V] \cdot L = s_\lambda (E_{|V}) \cdot  L_{|V}>0.
\end{equation*}

This finishes the proof of Corollary \ref{intro cor1}.
\end{proof}

\begin{rmk}
For a nef vector bundle $E$ over a compact K\"ahler manifold $X$ of dimension $n$, \cite{DPSneftangent} and the proof of Theorem \ref{intro main thrm} imply that the Schur class $s_\lambda (E)$ with $|\lambda|=n-1$ is always in the dual cone of the pseudo-effective cone of $(1,1)$ classes.
\end{rmk}

\subsection*{Acknowledgements}
This work was supported in part by Tsinghua University Initiative Scientific Research Program (No. 2019Z07L02016) and NSFC (No. 11901336).

\bibliography{reference}

\providecommand{\bysame}{\leavevmode\hbox to3em{\hrulefill}\thinspace}
\providecommand{\MR}{\relax\ifhmode\unskip\space\fi MR }
\providecommand{\MRhref}[2]{%
  \href{http://www.ams.org/mathscinet-getitem?mr=#1}{#2}
}
\providecommand{\href}[2]{#2}
\begin{thebibliography}{BDPP13}

\bibitem[BDPP13]{BDPP13}
S{\'e}bastien Boucksom, Jean-Pierre Demailly, Mihai P{\u{a}}un, and Thomas
  Peternell, \emph{The pseudo-effective cone of a compact {K}\"ahler manifold
  and varieties of negative {K}odaira dimension}, J. Algebraic Geom.
  \textbf{22} (2013), no.~2, 201--248.

\bibitem[BG71]{blochgieseker}
Spencer Bloch and David Gieseker, \emph{The positivity of the {C}hern classes
  of an ample vector bundle}, Invent. Math. \textbf{12} (1971), 112--117.
  \MR{297773}

\bibitem[Bou04]{Bou04}
S{\'e}bastien Boucksom, \emph{Divisorial {Z}ariski decompositions on compact
  complex manifolds}, Ann. Sci. \'Ecole Norm. Sup. (4) \textbf{37} (2004),
  no.~1, 45--76.

\bibitem[Dem92]{DemaillyRegularization}
Jean-Pierre Demailly, \emph{Regularization of closed positive currents and
  intersection theory}, J. Algebraic Geom. \textbf{1} (1992), no.~3, 361--409.
  \MR{1158622}

\bibitem[Dem12]{Dem_AGbook}
\bysame, \emph{Complex analytic and differential geometry. online book},
  available at www-fourier. ujf-grenoble. fr/~ demailly/manuscripts/agbook.
  pdf, Institut Fourier, Grenoble (2012).

\bibitem[Div16]{diveriosegre}
Simone Diverio, \emph{Segre forms and {K}obayashi-{L}\"{u}bke inequality},
  Math. Z. \textbf{283} (2016), no.~3-4, 1033--1047. \MR{3519994}

\bibitem[DPS94]{DPSneftangent}
Jean-Pierre Demailly, Thomas Peternell, and Michael Schneider, \emph{Compact
  complex manifolds with numerically effective tangent bundles}, J. Algebraic
  Geom. \textbf{3} (1994), no.~2, 295--345. \MR{1257325}

\bibitem[FL83]{fultonlazarsfeldPositive}
William Fulton and Robert Lazarsfeld, \emph{Positive polynomials for ample
  vector bundles}, Ann. of Math. (2) \textbf{118} (1983), no.~1, 35--60.
  \MR{707160}

\bibitem[Ful76]{fultonNakaicriterion}
William Fulton, \emph{Ample vector bundles, {C}hern classes, and numerical
  criteria}, Invent. Math. \textbf{32} (1976), no.~2, 171--178. \MR{396575}

\bibitem[Ful98]{fultonIntersectionBOOK2nd}
\bysame, \emph{Intersection theory}, second ed., Ergebnisse der Mathematik und
  ihrer Grenzgebiete. 3. Folge. A Series of Modern Surveys in Mathematics
  [Results in Mathematics and Related Areas. 3rd Series. A Series of Modern
  Surveys in Mathematics], vol.~2, Springer-Verlag, Berlin, 1998. \MR{1644323}

\bibitem[FX14]{fuxiao14kcone}
Jixiang Fu and Jian Xiao, \emph{Relations between the {K}\"ahler cone and the
  balanced cone of a {K}\"ahler manifold}, Adv. Math. \textbf{263} (2014),
  230--252. \MR{3239139}

\bibitem[Gie71]{giesekerample}
David Gieseker, \emph{{$p$}-ample bundles and their {C}hern classes}, Nagoya
  Math. J. \textbf{43} (1971), 91--116. \MR{296078}

\bibitem[Gri69]{griffithsConj}
Phillip~A. Griffiths, \emph{Hermitian differential geometry, {C}hern classes,
  and positive vector bundles}, Global {A}nalysis ({P}apers in {H}onor of {K}.
  {K}odaira), Univ. Tokyo Press, Tokyo, 1969, pp.~185--251. \MR{0258070}

\bibitem[Gul06]{gulerThesis}
Dincer Guler, \emph{Chern forms of positive vector bundles}, ProQuest LLC, Ann
  Arbor, MI, 2006, Thesis (Ph.D.)--The Ohio State University. \MR{2709037}

\bibitem[Gul12]{gulersegre}
\bysame, \emph{On {S}egre forms of positive vector bundles}, Canad. Math. Bull.
  \textbf{55} (2012), no.~1, 108--113. \MR{2932990}

\bibitem[Har66]{HartshorneAmpleBdl}
Robin Hartshorne, \emph{Ample vector bundles}, Inst. Hautes \'{E}tudes Sci.
  Publ. Math. (1966), no.~29, 63--94. \MR{193092}

\bibitem[Kle69]{kleimanAmplesurface}
Steven~L. Kleiman, \emph{Ample vector bundles on algebraic surfaces}, Proc.
  Amer. Math. Soc. \textbf{21} (1969), 673--676. \MR{251044}

\bibitem[Laz04]{lazarsfeldPosII}
Robert Lazarsfeld, \emph{Positivity in algebraic geometry. {II}}, Ergebnisse
  der Mathematik und ihrer Grenzgebiete. 3. Folge. A Series of Modern Surveys
  in Mathematics [Results in Mathematics and Related Areas. 3rd Series. A
  Series of Modern Surveys in Mathematics], vol.~49, Springer-Verlag, Berlin,
  2004, Positivity for vector bundles, and multiplier ideals. \MR{2095472}

\bibitem[Li20]{liping2020nonnegative}
Ping Li, \emph{{N}onnegative {H}ermitian vector bundles and {C}hern numbers},
  Math. Ann. (2020), 1--21.

\bibitem[Nak04]{Nak04}
Noboru Nakayama, \emph{Zariski-decomposition and abundance}, MSJ Memoirs,
  vol.~14, Mathematical Society of Japan, Tokyo, 2004.

\bibitem[Pin18]{pingaliChernForm}
Vamsi~Pritham Pingali, \emph{Representability of {C}hern-{W}eil forms}, Math.
  Z. \textbf{288} (2018), no.~1-2, 629--641. \MR{3774428}

\bibitem[RT19]{ross2019hodge}
Julius Ross and Matei Toma, \emph{Hodge-{R}iemann bilinear relations for
  {S}chur classes of ample vector bundles}, arXiv:1905.13636 (2019).

\bibitem[Siu74]{siuDecomposition}
Yum~Tong Siu, \emph{Analyticity of sets associated to {L}elong numbers and the
  extension of closed positive currents}, Invent. Math. \textbf{27} (1974),
  53--156. \MR{352516}

\bibitem[Tom10]{toma2010note}
Matei Toma, \emph{A note on the cone of mobile curves}, Comptes rendus de
  l'Acad{\'e}mie des sciences. S{\'e}rie 1, Math{\'e}matique \textbf{348}
  (2010), no.~1, 71--73.

\bibitem[UT77]{usuitango}
Sampei Usui and Hiroshi Tango, \emph{On numerical positivity of ample vector
  bundles with additional condition}, J. Math. Kyoto Univ. \textbf{17} (1977),
  no.~1, 151--164. \MR{437536}

\bibitem[WN19]{nystromDualityMorse}
David Witt~Nystr\"{o}m, \emph{Duality between the pseudoeffective and the
  movable cone on a projective manifold}, J. Amer. Math. Soc. \textbf{32}
  (2019), no.~3, 675--689, With an appendix by S\'{e}bastien Boucksom.
  \MR{3981985}

\end{thebibliography}
\bibliographystyle{amsalpha}

\bigskip

\bigskip

\noindent
\textsc{Department of Mathematical Sciences and Yau Mathematical Sciences Center}\\
\textsc{Tsinghua University, Beijing 100084, China}\\
\noindent
\verb"Email: jianxiao@tsinghua.edu.cn"

\end{document}